\newtheorem{theorem}{Theorem}
\newtheorem{definition}[theorem]{Definition}
\newtheorem{lemma}[theorem]{Lemma}
\newtheorem{proposition}[theorem]{Proposition}
\begin{document}
\author{Stefano Marmi}
\address{Scuola Normale Superiore, Piazza dei Cavalieri, 7, 56126 Pisa, Italy%
}
\email{s.marmi@sns.it}
\author{Piergiulio Tempesta}
\address{Scuola Normale Superiore, Centro di Ricerca Matematica "Ennio De
Giorgi" Piazza dei Cavalieri, 3, 56126 Pisa, Italy and Departamento de
Fisica Teorica II, Facultad de Fisicas, Universidad Complutense, 28040
Madrid, Spain.}
\email{p.tempesta@sns.it}
\title[Polylogs and Lipschitz--type formulae]{Polylogarithms, hyperfunctions
and \\
generalized Lipschitz summation formulae \ \ \ \ \ \ \ \ \ \ \ \ \ \ \ \ \ \ 
}
\maketitle

\begin{abstract}
A generalization of the classical Lipschitz summation formula is proposed.
It involves new polylogarithmic rational functions constructed via the
Fourier expansion of certain sequences of Bernoulli--type polynomials.
Related families of one--dimensional hyperfunctions are also constructed.
\end{abstract}

\tableofcontents

\section{Introduction}

The purpose of this paper is to provide a natural setting which allows to
generalize the Lipschitz summation formula to negative powers.

We recall that the classical Lipschitz summation formula gives the Fourier
series expansion of the periodic analytic function obtained by summation
over integer translates of the power $z^{-k}$, where $z\in \mathfrak{H}$ \
(the complex upper half plane) and $k$ is a positive integer.

By introducing suitable generalizations of the Lipschitz formula, we will
show how to construct new classes of hyperfunctions. Indeed, each of these
generalized formulae will provide a hyperfunctional equation involving a
specific two--variable polylogarithm series.

A simple introduction to the theory of hyperfunctions is sketched in Section
2, where the interested reader is referred to the classical works \cite{SKK}%
, \cite{Horm}. Hyperfunctional cohomology groups have also been related to
automorphic forms and period functions \cite{Bruggeman}.

The main results of this paper are essentially two. The first result is a
generalization of the classical Lipschitz summation formula. The second one
is the connection we establish between a new class of polylogarithms, the
theory of hyperfunctions in one variable and that of polynomial structures
of Appell type.

Precisely, our construction can be summarized as follows. We will define
suitable sequences of Appell polynomials of Bernoulli type: indeed, they
share with the Bernoulli polynomials several arithmetic properties,
including certain famous congruences (see the Appendix). Their periodic
versions provide primitives of the periodic delta function. To each sequence
we will associate a natural extension of the notion of polylogarithmic
function defined in the unit disc. We call this extension a \textit{delta
rational function}. It is a two--variable Dirichlet series, extending to the
whole Riemann sphere as a meromorphic function. The case of the classical
Bernoulli polynomials corresponds to the standard polylogarithmic function,
and it is treated thoroughly. The new Appell polynomial sequences we
construct, $\left\{ P_{n}\left( x\right) \right\} _{n\in \mathbb{N}}$ and $%
\left\{ Q_{n}\left( x\right) \right\} _{n\in \mathbb{N}}$ (depending on
certain parity properties) and the generalized polylogarithms $\delta
_{n}\left( q\right) $ proposed here satisfy interesting hyperfunctional
equations. These equations in turn correspond to generalized Lipschitz
summation formulae. For instance, in the case of polynomials $\left\{
P_{n}\left( x\right) \right\} _{n\in \mathbb{N}}$ it reads

\begin{equation}
\sum_{k\in \mathbb{Z}}\varphi _{\overline{P}_{n}}\left( \tau +k\right)
=2i\left( 2\pi i\right) ^{-n}\left\{ 
\begin{array}{c}
\Delta _{-n}\left( q\right) \text{ \ \ \ \ \ \ \ \ \ \ \ \ \ \ \ \ \ \ }if%
\text{ }\left\vert q\right\vert <1,\,\ i.e.\text{ }\Im \tau >0 \\ 
\\ 
\left( -1\right) ^{n-1}\Delta _{-n}\left( q^{-1}\right) \text{ \ \ }if\text{ 
}\left\vert q\right\vert >1,\,\ i.e.\text{ }\Im \tau <0%
\end{array}%
\right. \text{,}  \label{I.1}
\end{equation}%
where $\overline{P}_{n}$ are the hyperfunctions associated to the polynomial 
$P_{n}$ and $\varphi _{\overline{P}_{n}}$ is the function in $O^{1}\left( 
\overline{\mathbb{C}}\backslash \left[ 0,1\right] \right) $ representing $%
\overline{P}_{n}$.

In the last part of the paper, the relation between the theory of formal
groups and that of hyperfunctions is clarified. We will show that the Appell
structures introduced in the previous construction can be viewed as
polynomial realizations of certain formal group laws, related with the
universal Lazard formal group.

The future research plans include an extension of the proposed construction
to the multidimensional case (see also \cite{PD} for a different
generalization), as well as possible applications of the proposed Lipschitz
formulae to the study of Eisenstein series and periods of modular forms.

\textbf{Acknowledgements}. One of us (P. T.) is grateful to U. Zannier for a
very useful discussion.

\vspace{2mm}

\section{Hyperfunctions: basic preliminaries}

\vspace{1mm}

In this section, we will provide a brief and self--consistent introduction
to the theory of hyperfunctions of a single variable, following closely \cite%
{SKK}, Chapter IX of \cite{Horm}, \cite{Kaneko} and \cite{Marmi}. For a more
extensive treatment and further details, the reader is invited to consult
these books, as well as \cite{Pham} for interesting applications.

Let us denote by $\mathcal{O}$ the sheaf of holomorphic functions on
\thinspace $\mathbb{C}$, and denote by $\mathbb{C}^{+}$ and $\mathbb{C}^{-}$
the upper and lower half planes of $\mathbb{C}$.

\begin{definition}
The space of hyperfunctions $\mathcal{B}$ on the real line $\mathbb{R}$ is%
\begin{equation}
\mathcal{B}\left( \mathbb{R}\right) :=H_{\mathbb{R}}^{1}\left( \mathbb{C},%
\mathcal{O}\right) \text{,}  \label{H1}
\end{equation}

i.e. is the first sheaf cohomology group on $\mathbb{R}$.
\end{definition}

Now, since $\mathbb{C}^{+}$ $\cup $ $\mathbb{C}^{-}=\mathbb{C}$ $\backslash $
$\mathbb{R}$, we have the following decomposition:%
\begin{equation}
H_{\mathbb{R}}^{1}\left( \mathbb{C},\mathcal{O}\right) =\left[ H^{0}\left( 
\mathbb{C}^{+},\mathcal{O}\right) \oplus H^{0}\left( \mathbb{C}^{-},\mathcal{%
O}\right) \right] \text{ }/\text{ }H^{0}\left( \mathbb{C},\mathcal{O}\right) 
\text{.}  \label{H2}
\end{equation}%
In other words, since $H^{0}\left( \mathcal{O}\right) $ represents nothing
but the global sections of the sheaf (i.e. holomorphic functions), a
hyperfunction can be thought as a pair of holomorphic functions on the upper
and lower half planes respectively, modulo an entire function. This
geometric definition immediately generalizes to the case of hyperfunctions
in several variables. If $\Omega \subset \mathbb{R}$ is an open set, and $U$
an arbitrary complex neighborhood of $U$, then clearly%
\begin{equation}
\mathcal{B}\left( \Omega \right) =H_{\Omega }^{1}\left( U,\mathcal{O}\right) 
\text{.}  \label{H3}
\end{equation}%
Still denoting by $\mathcal{O}\left( U\right) $ the space of holomorphic
functions in $U$, another equivalent definition is%
\begin{equation}
\mathcal{B}\left( \Omega \right) :=\lim_{\longrightarrow }\text{ }_{U\text{ }%
\supset \text{ }\Omega }\text{ \ }\mathcal{O}\left( U\text{ }\backslash 
\text{ }\Omega \right) \text{ }/\text{ }\mathcal{O}\left( U\right)
\label{H4}
\end{equation}

where the inductive limit with respect to the family of complex
neighborhoods $U$ $\supset $ $\Omega $ is considered. A hyperfunction $%
f\left( x\right) $ is therefore an equivalence class $\left[ F\left(
z\right) \right] $, whose representative is \thinspace $F\left( z\right) \in 
\mathcal{O}\left( U\text{ }\backslash \text{ }\Omega \right) $. The
representative $F\left( z\right) $ is said to be a defining function of $%
f\left( x\right) $. Since%
\begin{equation*}
U\supset \Omega =U_{+}\cup U_{-}\text{, \qquad }U_{\pm }=U\cap \left\{ \Im
z\lessgtr 0\right\} \text{,}
\end{equation*}%
often the following boundary--value representation is used:%
\begin{equation*}
f\left( x\right) =F_{+}\left( x+i0\right) -F_{-}\left( x-i0\right) \text{,}
\end{equation*}%
with $F_{\pm }\left( z\right) =\left. F\left( z\right) \right\vert _{U_{\pm
}}$.

Alternatively, one can construct a theory of hyperfunctions based on
analytic functionals. Let $K\subset \mathbb{C}$ \ be a non empty compact
set, and denote by $A$ the space of entire analytic functions in $\mathbb{C}$%
.

\begin{definition}
The space $A^{\prime }\left( K\right) $ of the analytic functionals carried
by $K$ is the space of linear forms u acting on A such that for every
neighborhood V of K there is a constant $C_{V}>0$ such that%
\begin{equation}
\left\vert u\left( \varphi \right) \right\vert \leq C_{V}\text{ }%
\sup_{V}\left\vert \varphi \right\vert \text{, \qquad }\forall \varphi \in A%
\text{.}  \label{H5}
\end{equation}
\end{definition}

Observe that $A^{\prime }\left( K\right) $ is a Fr\'{e}chet space, since a
seminorm is associated to each neighborhood $V$ of $K$. One can define%
\begin{equation}
\mathcal{B}\left( \Omega \right) :=A^{\prime }\left( \overline{\Omega }%
\right) /A^{\prime }\left( \partial \Omega \right) \text{.}  \label{H6}
\end{equation}

It is interesting to notice that the space of hyperfunctions $\psi $ $\in
B\left( \Omega \right) $ \ with compact support $K\subset \Omega $ can be
identified with analytic functionals in $A^{\prime }\left( \mathbb{R}\right) 
$ with support $K$. Indeed, an analytic functional $u$ on $\cup
_{i=1}^{r}K_{i}$ can always be decomposed into a sum $u=u_{1}+...+u_{r}$,
with each of the functionals $u_{j}\in A^{\prime }\left( K_{j}\right) $.
Consequently, since supp $\psi \subset K\cup \partial \Omega $, the
contribution of $\psi $ on $\partial \Omega $ can be factored out and $\psi $
is identified with an uniquely defined functional with support in $\Omega $.

Therefore, we can also think of $A^{\prime }\left( K\right) $ as the space
of hyperfunctions with support in $K$. \ The link between the two approaches
to the theory of hyperfunctions is now provided by the following lemma. Let $%
\mathcal{O}^{1}\mathcal{(}\overline{\mathbb{C}}$ $\backslash $ $K\mathcal{)}$
denotes the space of holomorphic functions on $\overline{\mathbb{C}}$ $%
\backslash $ $K$ and vanishing at infinity.

\begin{lemma}
\label{th1} The spaces $A^{\prime }\left( K\right) $ and $\mathcal{O}^{1}%
\mathcal{(}\overline{\mathbb{C}}$ $\backslash $ $K\mathcal{)}$ are
canonically isomorphic. To each $u\in A^{\prime }\left( K\right) $ it
corresponds a function $\varphi \in \mathcal{O}^{1}\mathcal{(}\overline{%
\mathbb{C}}$ $\backslash $ $K\mathcal{)}$ given by%
\begin{equation*}
\varphi \left( z\right) =u\left( c_{z}\right) \text{, \ \ }\forall z\in 
\mathbb{C}\text{ }\backslash \text{ }K\text{,}
\end{equation*}%
where $c_{z}\left( x\right) =\frac{1}{\pi }\frac{1}{x-z}$. Conversely, to
each $\varphi \in \mathcal{O}^{1}\mathcal{(}\overline{\mathbb{C}}$ $%
\backslash $ $K\mathcal{)}$ it corresponds the hyperfunction%
\begin{equation}
u\left( \psi \right) =\frac{i}{2\pi }\int_{\gamma }\varphi \left( z\right)
\psi \left( z\right) dz\text{, \ \ }\forall \psi \in A\text{,}  \label{H7}
\end{equation}%
where $\gamma $ is any piecewise $\mathcal{C}^{1}$path winding around K in
the positive direction.
\end{lemma}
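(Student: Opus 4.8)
The plan is to establish the canonical isomorphism between $A'(K)$ and $\mathcal{O}^1(\overline{\mathbb{C}}\setminus K)$ by exhibiting the two maps explicitly and checking they are mutually inverse. First I would take $u \in A'(K)$ and define $\varphi(z) = u(c_z)$ where $c_z(x) = \frac{1}{\pi}\frac{1}{x-z}$. The key preliminary observation is that for fixed $z \notin K$, the function $x \mapsto c_z(x)$ is entire (its only pole $x=z$ lies off $K$), so $u(c_z)$ makes sense. I would then verify that $\varphi$ is holomorphic on $\overline{\mathbb{C}}\setminus K$: holomorphy in $z$ follows from differentiating under the functional using the estimate \eqref{H5}, since $z \mapsto c_z$ is a holomorphic $A$-valued map, and the difference quotients converge uniformly on any neighborhood $V$ of $K$. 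Vanishing at infinity is immediate because $c_z \to 0$ uniformly on compacta as $z \to \infty$, so $\varphi(z) = u(c_z) \to 0$.

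For the reverse direction, I would start from $\varphi \in \mathcal{O}^1(\overline{\mathbb{C}}\setminus K)$ and define the functional via the contour integral \eqref{H7}, namely $u(\psi) = \frac{i}{2\pi}\int_\gamma \varphi(z)\psi(z)\,dz$ for a positively oriented cycle $\gamma$ winding around $K$. The first thing to check is well-definedness: because $\varphi$ is holomorphic on $\overline{\mathbb{C}}\setminus K$ and $\psi$ is entire, the integrand is holomorphic in the region between any two such contours, so by Cauchy's theorem the value is independent of the choice of $\gamma$. The bound \eqref{H5} follows by taking $\gamma$ inside an arbitrary neighborhood $V$ of $K$ and estimating $|u(\psi)| \le \frac{1}{2\pi}\,\mathrm{length}(\gamma)\,\sup_\gamma|\varphi|\,\sup_V|\psi|$, giving the required constant $C_V$.

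The crux is showing the two maps are inverse to each other, and here the main computation is the Cauchy integral formula. Starting from $u \in A'(K)$, forming $\varphi(z) = u(c_z)$, and feeding $\varphi$ back into \eqref{H7}, I would want to recover $u(\psi)$; this amounts to justifying the interchange
\begin{equation*}
\frac{i}{2\pi}\int_\gamma u(c_z)\,\psi(z)\,dz = u\!\left(\frac{i}{2\pi}\int_\gamma c_z(\cdot)\,\psi(z)\,dz\right),
\end{equation*}
after which the inner integral evaluates by residues to $\psi$ itself. Conversely, starting from $\varphi$, building $u$, and computing $u(c_w)$ reproduces $\varphi(w)$ by a direct contour-deformation/residue argument for a point $w$ outside $\gamma$.

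\textbf{The main obstacle} I expect is rigorously justifying the exchange of the analytic functional $u$ with the contour integral. This requires realizing $\psi \mapsto \frac{i}{2\pi}\int_\gamma c_z(\cdot)\psi(z)\,dz$ as a limit of Riemann sums of the $A$-valued function $z \mapsto c_z$, and using the continuity of $u$ guaranteed by \eqref{H5} together with uniform convergence of these sums in the topology of $A$ on a neighborhood of $K$. Once the Fubini-type interchange is secured, the residue computations are routine applications of the Cauchy integral formula, and canonicity (independence of all auxiliary choices) follows from the intrinsic description of both maps.
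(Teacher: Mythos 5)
The paper never proves this lemma --- it is quoted in the preliminaries from \cite{Horm}, \cite{Kaneko}, \cite{Marmi} --- so your proposal has to stand on its own, and while your overall plan (Cauchy transform one way, contour pairing the other, mutual inversion via residues plus a Riemann-sum interchange) is the standard route, it has a genuine gap at the very first step. Your ``key preliminary observation'' is false: $c_{z}(x)=\frac{1}{\pi}\frac{1}{x-z}$ is \emph{not} entire --- it has a pole at $x=z$, as your own parenthesis admits. Since Definition 2 of the paper makes $u\in A^{\prime}(K)$ a linear form on the space $A$ of entire functions, the expression $u(c_{z})$ is undefined until you extend $u$ continuously to functions holomorphic in a neighbourhood of $K$. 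Such an extension exists and is unique precisely because the relevant $K$ (here $K\subset\mathbb{R}$, e.g.\ $K=[0,1]$) has connected complement: by Runge's theorem polynomials are dense in $\mathcal{O}(V)$ for a fundamental system of neighbourhoods $V$, and the estimate (\ref{H5}) then yields the extension. This is not pedantry: your argument uses no property of $K$ whatsoever, yet the statement is \emph{false} for a general compact $K\subset\mathbb{C}$. For $K$ the unit circle, take $\varphi$ equal to $1$ on the bounded component of $\overline{\mathbb{C}}\setminus K$ and $0$ on the unbounded one; then $\varphi\neq 0$ in $\mathcal{O}^{1}(\overline{\mathbb{C}}\setminus K)$, but the functional (\ref{H7}) annihilates every entire $\psi$ by Cauchy's theorem, so the correspondence is not injective. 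Any correct proof must therefore invoke $K\subset\mathbb{R}$ (or polynomial convexity) somewhere; yours cannot, and the same extension issue resurfaces after your Fubini-type interchange, because the Riemann sums $\sum_{j}c_{z_{j}}(\cdot)\psi(z_{j})\Delta z_{j}$ and their limit are holomorphic near $K$ but not entire (the limit equals $\psi$ only inside $\gamma$ and $0$ outside).

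Second, the inversion you claim to be ``routine'' does not actually close up with the stated normalizations, which is evidence the computation was not carried through. With $c_{z}(x)=\frac{1}{\pi(x-z)}$ and prefactor $\frac{i}{2\pi}$, the Cauchy formula $\int_{\gamma}\frac{\psi(z)}{x-z}\,dz=-2\pi i\,\psi(x)$ (for $x$ inside $\gamma$) makes your inner integral equal to $\frac{1}{\pi}\psi$, not $\psi$, so the composition $u\mapsto\varphi\mapsto u$ returns $\frac{1}{\pi}u$; likewise $\varphi\mapsto u\mapsto\varphi$ returns $\frac{1}{\pi}\varphi$. The maps as printed are inverse only up to a factor $\pi$: either the kernel should be $\frac{1}{x-z}$, or the prefactor in (\ref{H7}) should be $\frac{i}{2}$. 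The latter is what consistency forces, since the periodic pairing (\ref{H9}) uses $\frac{i}{2}$ with the kernel $\cot\pi(x-z)=\sum_{n\in\mathbb{Z}}\frac{1}{\pi(x-n-z)}$, and commutativity of the diagram (\ref{H10}) with $\Sigma_{\mathbb{Z}}$ then pins down $\frac{i}{2}$ in the non-periodic case as well. A complete proof would either correct the constant or explicitly flag the misprint in (\ref{H7}); asserting that the residue computation ``recovers $\psi$ itself'' as stated is simply incorrect.
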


For future purposes, we also briefly describe \textit{periodic hyperfunctions%
}. Let $\mathbb{T}^{1}=\mathbb{R}$ $\backslash $ $\mathbb{Z}\subset \mathbb{C%
}$ $\backslash $ $\mathbb{Z}$. A hyperfunction on $\mathbb{T}^{1}$ is a
linear functional $\Psi $ on the space $\mathcal{O}\left( \mathbb{T}%
^{1}\right) $ of functions analytic in a complex neighborhood of $\mathbb{T}%
^{1}$ such that for all neighborhood $V$ of $\mathbb{T}^{1}$ there exists a
constant $C_{V}>0$ such that%
\begin{equation}
\left\vert \Psi \left( \Phi \right) \right\vert \leq C_{V}\text{ }%
\sup_{V}\left\vert \Phi \right\vert ,\text{ \qquad }\forall \Phi \in 
\mathcal{O(}V\mathcal{)}\text{.}  \label{H8}
\end{equation}

We will denote by $A^{\prime }\left( \mathbb{T}^{1}\right) $ the Fr\'{e}chet
space of hyperfunctions with support in $\mathbb{T}^{1}$. Let $\mathcal{O}%
_{\Sigma }$ denote the complex vector space of holomorphic functions $\Phi :%
\mathbb{C}$ $\backslash $ $\mathbb{R\rightarrow C}$, 1--periodic, bounded at 
$\pm $ $i\infty $ and such that $\Phi \left( \pm i\infty \right) :=\lim_{\Im
z\rightarrow \pm \infty }\Phi \left( z\right) $ exist and verify $\Phi
\left( +i\infty \right) =-\Phi \left( -i\infty \right) $.

\begin{lemma}
The spaces $A^{\prime }\left( \mathbb{T}^{1}\right) $ and $\mathcal{O}%
_{\Sigma }$ are canonically isomorphic: to each $\Psi $ $\in $ $A^{\prime
}\left( \mathbb{T}^{1}\right) $ it corresponds a function $\Phi \in \mathcal{%
O}_{\Sigma }$ given by%
\begin{equation*}
\Phi \left( z\right) =\Psi \left( C_{z}\right) ,\text{ \ }\forall z\in 
\mathbb{C}\text{ }\backslash \text{ }K\text{,}
\end{equation*}

where $C_{z}\left( x\right) =\cot \pi \left( x-z\right) $. Conversely, to
each $\Phi \in \mathcal{O}_{\Sigma }$ it corresponds the hyperfunction%
\begin{equation}
\Psi \left( \Xi \right) =\frac{i}{2}\int_{\Gamma }\Phi \left( z\right) \Xi
\left( z\right) dz\text{, \ \ }\forall \text{ }\Xi \in A^{\prime }\left( 
\mathbb{T}^{1}\right) \text{,}  \label{H9}
\end{equation}
where $\Gamma $ is any piecewise $\mathcal{C}^{1}$ path winding around a
closed interval $I\subset \mathbb{R}$ of length 1 in the positive direction.
\end{lemma}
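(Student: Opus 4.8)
The plan is to prove this lemma as a periodic analogue of Lemma \ref{th1}, replacing the Cauchy kernel $c_z(x)=\frac{1}{\pi}\frac{1}{x-z}$ by the cotangent kernel $C_z(x)=\cot\pi(x-z)$, which is the natural $1$-periodic summation of $\frac{1}{x-z}$ (indeed $\pi\cot\pi w=\sum_{k\in\mathbb{Z}}\frac{1}{w+k}$ in the principal-value sense). First I would verify that the map $\Psi\mapsto\Phi$, $\Phi(z)=\Psi(C_z)$, lands in $\mathcal{O}_\Sigma$. For fixed $z\in\mathbb{C}\setminus\mathbb{R}$ the function $x\mapsto\cot\pi(x-z)$ is analytic in a complex neighborhood of $\mathbb{T}^1$, so $\Psi(C_z)$ is defined; holomorphy in $z$ follows because $z\mapsto C_z$ is a holomorphic $\mathcal{O}(V)$-valued map and $\Psi$ is continuous. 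The $1$-periodicity of $\Phi$ is inherited from $C_{z+1}=C_z$. For the behavior at $\pm i\infty$ I would use $\cot\pi(x-z)\to\mp i$ as $\Im z\to\pm\infty$ uniformly for $x$ near $\mathbb{T}^1$, so by continuity of $\Psi$ one gets $\Phi(\pm i\infty)=\mp i\,\Psi(1)$; in particular both limits exist, $\Phi$ is bounded near $\pm i\infty$, and $\Phi(+i\infty)=-\Phi(-i\infty)$, which is exactly the defining constraint of $\mathcal{O}_\Sigma$.

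Next I would construct the inverse map, sending $\Phi\in\mathcal{O}_\Sigma$ to the functional $\Psi$ defined by the contour integral \eqref{H9}, and check it is well defined: the integrand $\Phi(z)\Xi(z)$ is holomorphic on a neighborhood of $\mathbb{T}^1$ minus the support data, and by periodicity the contributions of the two vertical sides of a period strip cancel, so the value of $\int_\Gamma$ depends only on the homology class of $\Gamma$ around one interval $I$ of length $1$, not on the particular path. The bound \eqref{H8} follows by estimating the integral by the length of $\Gamma$ times $\sup_\Gamma|\Phi|$ times $\sup_\Gamma|\Xi|$, giving the required seminorm estimate with $C_V$ proportional to the length of $\Gamma$; this shows $\Psi\in A'(\mathbb{T}^1)$.

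The crux is then to show these two maps are mutually inverse, i.e. that the composition reproduces the identity. I would start from a given $\Psi\in A'(\mathbb{T}^1)$, form $\Phi(z)=\Psi(C_z)$, and substitute into \eqref{H9}; after interchanging $\Psi$ with the contour integral (justified by the Fréchet continuity of $\Psi$ and uniform convergence of Riemann sums along the compact contour $\Gamma$), the problem reduces to evaluating $\frac{i}{2}\int_\Gamma\cot\pi(x-z)\,\Xi(z)\,dz$ for a test function $\Xi$ analytic near $\mathbb{T}^1$. Here the residue calculus does the work: $\cot\pi(x-z)$ has simple poles at $z=x+k$, $k\in\mathbb{Z}$, with residue $-\tfrac{1}{\pi}$, and the normalizing constant $\frac{i}{2}$ together with the factor $2\pi i$ from a positively oriented residue is chosen precisely so that the contour enclosing one period returns $\Xi(x)$, recovering $\Psi(\Xi)$. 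The main obstacle, and the step deserving the most care, is this interchange-and-residue computation together with the correct bookkeeping of orientation and the period-strip cancellation, since one must ensure the noncancelling contribution is exactly one residue and that the constant $\Phi(+i\infty)=-\Phi(-i\infty)$ condition is what guarantees the vertical-side terms vanish; once this is verified in one direction, the reverse composition follows by the analogous residue identity $C_z(x)=\Psi_\Phi(C_z)$ using uniqueness of the defining function modulo the global constant (entire periodic bounded functions), which is handled exactly as the $\mathcal{O}(\mathbb{C})$ quotient was in Lemma \ref{th1}.
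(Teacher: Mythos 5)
The paper itself offers no proof of this lemma: it is stated as background in Section 2 and imported from the literature (essentially \cite{Marmi}; cf.\ the non--periodic analogue, Lemma \ref{th1}). Your proposal reconstructs what is in fact the standard argument behind it --- cotangent--kernel duality, well--definedness of (\ref{H9}) via the periodicity cancellation of the vertical sides of a period rectangle, the seminorm estimate giving (\ref{H8}), and the residue normalization $\frac{i}{2}\cdot 2\pi i\cdot\left(-\frac{1}{\pi}\right)=1$ --- and in structure it is sound; it also fits the paper's own viewpoint, since your identity $\pi\cot\pi w=\sum_{k\in\mathbb{Z}}\frac{1}{w+k}$ is exactly the $\Sigma_{\mathbb{Z}}$ periodization of the Cauchy kernel in diagram (\ref{H10}). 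You moreover correctly read $\Xi$ in (\ref{H9}) as a test function in $\mathcal{O}(\mathbb{T}^{1})$ rather than the paper's misprinted ``$\Xi\in A^{\prime}(\mathbb{T}^{1})$''.

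Two local points need repair. First, a sign: as $\Im z\to\pm\infty$ one has $\cot\pi(x-z)\to\pm i$ (check $\cot(-i\pi y)=i\coth(\pi y)\to i$ as $y\to+\infty$), so $\Phi(\pm i\infty)=\pm i\,\Psi(1)$; your $\mp i$ is flipped, though the conclusion $\Phi(+i\infty)=-\Phi(-i\infty)$ survives. Second, and more substantively, you locate the use of the condition $\Phi(+i\infty)=-\Phi(-i\infty)$ in the wrong place: the vertical sides cancel by $1$--periodicity alone, as you yourself note earlier in the same paragraph. Where the condition actually enters is the composition you dismiss as an ``analogous residue identity'', namely the verification that $\Psi_{\Phi}(C_{z})=\Phi(z)$ for $\Phi\in\mathcal{O}_{\Sigma}$. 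Unlike Lemma \ref{th1}, where one works modulo $H^{0}(\mathbb{C},\mathcal{O})$, the space $\mathcal{O}_{\Sigma}$ is not defined as a quotient, so this identity must hold on the nose, not modulo constants. Carrying it out --- push the two horizontal contours to $\pm i\infty$, which is legitimate because boundedness makes the ends of the cylinder removable singularities in the coordinate $e^{2\pi iw}$, so the limits $\Phi(\pm i\infty)$ are attained uniformly --- one picks up the residue at $w=z$ together with boundary terms, obtaining $\Psi_{\Phi}(C_{z})=\Phi(z)-\frac{1}{2}\left(\Phi(+i\infty)+\Phi(-i\infty)\right)$; it is precisely the oddness condition that annihilates the extra constant (consistently with the fact that a function equal to the same constant on both half planes lies in the kernel of $\Phi\mapsto\Psi_{\Phi}$). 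Filling in this computation closes the only genuine gap in your sketch; the rest is correct as outlined.
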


Given a compactly supported hyperfunction, making infinitely many copies of
it translating its support leads to a periodic hyperfunction. This point of
view is systematically exploited in \cite{Marmi}, from which the following
commutative diagram is taken:

\begin{equation*}
\quad \ \ \ \ \ \ \ A^{\prime }\left( \left[ 0,1\right] \right) \text{ \
\quad\ \ \ \ }\longrightarrow \text{ }\ \ \ \text{\ \ \quad }O^{1}\left( 
\overline{\mathbb{C}}\text{ }\backslash \text{ }\left[ 0,1\right] \right)
\end{equation*}

\begin{equation*}
\Sigma _{\mathbb{Z}}\downarrow \text{ \ \ \ \ \ \ }\ \ \ \ \ \ \ \ \ \ \ \ 
\text{\ \ \ \ \ \ \ \ \ \ \ \ }\downarrow \Sigma _{\mathbb{Z}}
\end{equation*}

\begin{equation}
A^{\prime }\left( \mathbb{T}^{1}\right) \text{ \ }\ \text{\ \ \ \ \ }\ \ \
\longrightarrow \text{ }\ \ \ \ \ \ \ \ \text{\ \ \ \ \ }\mathcal{O}_{\Sigma
}  \label{H10}
\end{equation}

The horizontal lines are the above mentioned isomorphisms, and $\sum_{%
\mathbb{Z}}$ denotes the summation over integer translates.

\vspace{2mm}

\section{Delta rational functions}

\subsection{Delta rational functions and polylogarithms}

\vspace{2mm}

In this section we introduce the notion of delta rational functions, which
represent a natural generalization of the notion of polylogarithms \cite%
{Lewin}, \cite{Zagier}. These functions are connected to the classical
Bernoulli polynomials via the Fourier expansion formula (\ref{1.3}) and
enter a generalized Lipschitz summation formula providing new classes of
hyperfunctions.

\begin{definition}
\bigskip The delta rational function is the series expressed for any $n\in 
\mathbb{Z}$, and $q\in \mathbb{D}$ by%
\begin{equation}
\delta _{n}\left( q\right) =\sum_{k=1}^{\infty }k^{n}q^{k}\text{.}
\label{1.1}
\end{equation}
\end{definition}

Observe that, if $n\geq 0$, $\delta _{n}$ extends to the whole Riemann
sphere as a rational function of degree $n+1$ with just a simple pole of
order $n+1$ at $q=1$. If $n\leq -1$ then $\delta _{n}$ coincides with the
classical polylogarithm series of order $-n$ (actually $\delta _{-1}\left(
q\right) =-\log \left( 1-q\right) $) and extends to the whole $\mathbb{C}$ $%
\backslash \lbrack 1,+\infty )$ and as a multiplicative function to the
whole $\mathbb{C}$ $\backslash \{0,1,\infty \}$. Indeed, since%
\begin{equation*}
q\partial _{q}\delta _{n}\left( q\right) =\delta _{n+1}\left( q\right) ,
\end{equation*}%
one can define the analytic continuation of $\delta _{n}$. For instance, the
continuation of $\delta _{-2}$ is obtained by means of the integral formula%
\begin{equation*}
\delta _{-2}\left( q\right) =-\int_{0}^{q}\frac{\log \left( 1-t\right) }{t}%
dt=\int_{0}^{q}\left( \int_{0}^{t}\frac{d\zeta }{1-\zeta }\right) \frac{dt}{t%
}\text{.}
\end{equation*}%
Note that $[1,+\infty )$ is a branch cut. For all $n\in \mathbb{Z}$ and $%
q\in \mathbb{D}$, one has%
\begin{equation}
\delta _{n}\left( q^{k}\right) =k^{-1-n}\sum_{\Lambda ^{k}=1}\delta
_{n}\left( \Lambda q\right) \text{,}  \label{1.2}
\end{equation}%
where $\Lambda $ denotes a $k$--th root of unity. The equality extends to
the closed disk if $n\leq -2$. One can directly prove the following result.

\begin{lemma}
The fundamental inversion equation holds%
\begin{equation}
\delta _{n}\left( q\right) +\left( -1\right) ^{n}\delta _{n}\left(
q^{-1}\right) =\left\{ 
\begin{array}{c}
0\qquad \qquad \qquad \qquad \qquad \ \ \ if\text{ }n\geq 1 \\ 
\\ 
-\frac{\left( 2\pi i\right) ^{-n}}{\left( -n\right) !}B_{-n}\left( \frac{%
\log q}{2\pi i}\right) \text{ \qquad }if\text{ }n\leq 0%
\end{array}%
\right. \text{,}  \label{1.3}
\end{equation}

for all $q\neq 1$ if $n\geq 0$, and $q$ in $%
\mathbb{C}
\backslash \left[ 0,+\infty \right] $ if $n\leq 0$. Here $B_{k}$ is the $k$%
--th Bernoulli polynomial.
\end{lemma}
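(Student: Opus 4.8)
The plan is to treat the two ranges of $n$ separately, exploiting in both cases the recursion $q\partial_{q}\delta_{n}=\delta_{n+1}$ together with the base identity at $n=0$. The starting point is the elementary computation
\begin{equation*}
\delta_{0}\left( q\right) +\delta_{0}\left( q^{-1}\right) =\frac{q}{1-q}+\frac{q^{-1}}{1-q^{-1}}=\frac{q}{1-q}-\frac{1}{1-q}=-1,
\end{equation*}
understood as an identity of rational functions; since $B_{0}\equiv 1$ and $(2\pi i)^{0}/0!=1$, this is exactly the asserted formula in the case $n=0$.

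For $n\geq 1$ (where the right--hand side vanishes) the key observation is that the Euler operator $D:=q\partial _{q}$ and the involution $\sigma :q\mapsto q^{-1}$ anticommute: a one--line computation gives $D\sigma =-\sigma D$, whence $D^{n}\sigma =(-1)^{n}\sigma D^{n}$. Applying $D^{n}$ to the $n=0$ identity, and using $D^{n}\delta _{0}=\delta _{n}$ together with $D^{n}(\mathrm{const})=0$, I obtain $\delta _{n}\left( q\right) +(-1)^{n}\delta _{n}\left( q^{-1}\right) =0$ first on $\mathbb{D}$, where the series converge; since $\delta _{n}$ is rational for $n\geq 1$, the identity then extends to all $q\neq 1$ by analytic continuation.

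For $n\leq 0$, write $m=-n\geq 0$; the case $m=0$ is the base identity above, so assume $m\geq 1$. Here the plan is to invoke the classical Fourier expansion of the Bernoulli polynomials,
\begin{equation*}
B_{m}\left( x\right) =-\frac{m!}{\left( 2\pi i\right) ^{m}}\sum_{k\neq 0}\frac{e^{2\pi ikx}}{k^{m}},\qquad 0<x<1,
\end{equation*}
and to substitute $q=e^{2\pi ix}$, so that $x=\tfrac{\log q}{2\pi i}$. Splitting the sum over $k>0$ and $k<0$, the positive part is $\delta _{-m}\left( q\right) $ and, since $(-k)^{-m}=(-1)^{m}k^{-m}$, the negative part is $(-1)^{m}\delta _{-m}\left( q^{-1}\right) $; solving for this combination produces exactly $-\tfrac{\left( 2\pi i\right) ^{m}}{m!}B_{m}\left( \tfrac{\log q}{2\pi i}\right) $, which is the asserted formula after reinstating $m=-n$. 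This establishes the identity on the arc $\{\left\vert q\right\vert =1\}\setminus \{1\}$, and both sides being holomorphic on the connected domain $\mathbb{C}\setminus \lbrack 0,+\infty )$, the identity theorem propagates it to all of that domain.

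The step I expect to be the main obstacle is the branch bookkeeping in the case $n\leq 0$: one must check that the two sides carry the same branch conventions on $\mathbb{C}\setminus \lbrack 0,+\infty )$, which is precisely why the cut $[0,+\infty )$ (rather than merely $[1,+\infty )$) appears, and one must handle $m=1$ with care, since there the Fourier series of $B_{1}\left( x\right) =x-\tfrac{1}{2}$ converges only conditionally and $\delta _{-1}\left( q\right) =-\log \left( 1-q\right) $ carries a logarithmic branch. In that case a direct verification using $1-q^{-1}=-(1-q)/q$ and $\log \left( q^{-1}\right) =-\log q$ reproduces the additive constant $\pi i$ coming from $B_{1}$ and fixes any ambiguity left by the continuation.
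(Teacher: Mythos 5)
Your proof is correct: the base identity $\delta_0(q)+\delta_0(q^{-1})=-1$, the anticommutation $q\partial_q\circ\sigma=-\sigma\circ q\partial_q$ handling $n\geq 1$ as an identity of rational functions, and the Hurwitz--Fourier expansion of $B_m$ on $(0,1)$ combined with the identity theorem for $n\leq 0$ (with the branch of $\log q$ cut along $[0,+\infty)$) all check out. The paper states this lemma without an explicit proof (``One can directly prove the following result''), but the mechanism it invokes in Section 4 to derive the generalized inversion equations (\ref{5.3})--(\ref{5.4}) is precisely the Fourier expansion (\ref{Ap3}) of the relevant polynomials restricted to $(0,1)$, so your $n\leq 0$ argument coincides with the paper's intended approach, and the $n\geq 1$ case is the expected direct verification.
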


\subsection{Periodic hyperfunctions}

\vspace{1mm}

The inversion relation (\ref{1.3}) has a beautiful interpretation in terms
of hyperfunctions, as we will show below.

\begin{definition}
The periodic Bernoulli functions and distributions are expressed by%
\begin{equation}
\widetilde{B}_{n}\left( x\right) =\left\{ 
\begin{array}{c}
-\frac{B_{n}\left( x-\left[ x\right] \right) }{n!}\qquad if\text{ }n\geq 1
\\ 
-1+\delta _{\mathbb{T}}\left( x\right) \text{ \ \ \ }if\text{ }n=0 \\ 
\delta _{\mathbb{T}}^{\left( -n\right) }\left( x\right) \qquad if\text{ }n<0%
\end{array}%
\right. \text{,}  \label{2.1}
\end{equation}%
where $\left[ x\right] $ is the integer part of $x$, $\delta _{\mathbb{T}}$
is the periodic delta distribution and $\delta _{\mathbb{T}}^{\left(
k\right) }$ its derivative of order $k\geq 0$.
\end{definition}

Explicitly,%
\begin{equation}
\delta _{\mathbb{T}}\left( x\right) =\sum_{k=-\infty }^{+\infty }e^{2\pi ikx}%
\text{.}  \label{2.2}
\end{equation}%
From the well--known property of the periodic Bernoulli functions%
\begin{equation}
\frac{d^{n}}{dx^{n}}\left( B_{n}\left( x-\left[ x\right] \right) \right)
=n!\left( 1-\delta _{\mathbb{T}}\right) \text{,}  \label{2.3}
\end{equation}%
we see that we can consider the Bernoulli functions as primitives of the
periodic delta function. It is immediate to check that the following
hyperfunctional equation holds.

\begin{proposition}
\bigskip Let $q^{\pm }=e^{2\pi i\left( x\pm i0\right) },$ $x\in \mathbb{R}$.
For all $n\in \mathbb{Z}$ we have%
\begin{equation}
\delta _{n}\left( q^{+}\right) +\left( -1\right) ^{n}\delta _{n}\left(
\left( q^{-}\right) ^{-1}\right) =\left( 2\pi i\right) ^{-n}\widetilde{B}%
_{-n}\left( x\right) \text{.}  \label{2.4}
\end{equation}
\end{proposition}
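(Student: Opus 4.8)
The plan is to reduce both sides of \eqref{2.4} to the common bilateral Fourier series $\sum_{k\neq 0}k^{n}e^{2\pi ikx}$, read as a periodic hyperfunction; equivalently, \eqref{2.4} is the boundary--value incarnation of the interior inversion equation \eqref{1.3}, now valid for all $n$ simultaneously. First I would substitute $q^{+}=e^{2\pi i(x+i0)}$ and $(q^{-})^{-1}=e^{2\pi i(-x+i0)}$ directly into the defining series \eqref{1.1}. Writing the $\pm i0$ as a positive regularizing parameter $\varepsilon$, both arguments lie in the open unit disc, so the series converge and
\begin{equation*}
\delta_{n}(q^{+})=\sum_{k\geq 1}k^{n}e^{2\pi ikx}e^{-2\pi k\varepsilon},\qquad \delta_{n}\bigl((q^{-})^{-1}\bigr)=\sum_{k\geq 1}k^{n}e^{-2\pi ikx}e^{-2\pi k\varepsilon}.
\end{equation*}

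Next I would form the combination on the left of \eqref{2.4}. Reindexing the second sum by $k\mapsto -k$ and using $(-k)^{n}=(-1)^{n}k^{n}$, the prefactor $(-1)^{n}$ is absorbed and the two half--sums assemble into the single bilateral sum
\begin{equation*}
\delta_{n}(q^{+})+(-1)^{n}\delta_{n}\bigl((q^{-})^{-1}\bigr)=\sum_{k\neq 0}k^{n}e^{2\pi ikx}e^{-2\pi|k|\varepsilon}.
\end{equation*}
This is exactly the Abel regularization of $\sum_{k\neq 0}k^{n}e^{2\pi ikx}$, and the role of the $\pm i0$ prescription is precisely to supply the summability factor $e^{-2\pi|k|\varepsilon}$ that renders the boundary value well defined as $\varepsilon\to 0^{+}$.

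It then remains to identify the limit with $(2\pi i)^{-n}\widetilde B_{-n}(x)$, which I would do by the three cases of \eqref{2.1}. For $n\leq -1$ one has $-n\geq 1$, and the classical Fourier expansion of the periodic Bernoulli function gives $\widetilde B_{-n}(x)=(2\pi i)^{n}\sum_{k\neq 0}k^{n}e^{2\pi ikx}$, so multiplication by $(2\pi i)^{-n}$ reproduces the bilateral sum. For $n=0$ the definition $\widetilde B_{0}=-1+\delta_{\mathbb T}$ together with \eqref{2.2} yields $\delta_{\mathbb T}(x)-1=\sum_{k\neq 0}e^{2\pi ikx}$, again the required expression. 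For $n\geq 1$ one differentiates \eqref{2.2} termwise to obtain $\delta_{\mathbb T}^{(n)}(x)=(2\pi i)^{n}\sum_{k\neq 0}k^{n}e^{2\pi ikx}$ (the $k=0$ term vanishing), whence $(2\pi i)^{-n}\widetilde B_{-n}(x)=\sum_{k\neq 0}k^{n}e^{2\pi ikx}$ as well.

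The one step deserving genuine care, and the main obstacle, is the passage $\varepsilon\to 0^{+}$ when $n\geq 0$: here the bilateral series is not classically convergent and the equality must be read in the sense of hyperfunctions, the assertion being that the Abel--summed boundary value equals the distribution $\delta_{\mathbb T}^{(n)}$. I would justify this by appealing to the isomorphism $A'(\mathbb T^{1})\cong\mathcal O_{\Sigma}$ and to \eqref{2.3}, which already exhibits the periodic Bernoulli functions as primitives of $1-\delta_{\mathbb T}$; this legitimizes termwise differentiation and the Abel limit at the level of periodic hyperfunctions. For $n\leq -2$ the series converge absolutely on the closed disc and no regularization is needed, so the identity is elementary there.
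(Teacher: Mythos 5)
Your proof is correct and is essentially the verification the paper leaves implicit (the paper offers no written proof, presenting the Proposition as ``immediate to check'' right after the definitions \eqref{2.1}--\eqref{2.3}): substituting $q^{+}=e^{2\pi i(x+i0)}$ and $(q^{-})^{-1}=e^{2\pi i(-x+i0)}$ into the series \eqref{1.1}, assembling the Abel-regularized bilateral sum $\sum_{k\neq 0}k^{n}e^{2\pi ikx}$, and matching it case by case with the Fourier expansions of $\widetilde{B}_{-n}$ coming from \eqref{2.1} and \eqref{2.2}. Your explicit handling of the $n\geq 0$ case, where the limit must be taken in the sense of periodic hyperfunctions/distributions rather than pointwise, supplies precisely the detail the paper glosses over.
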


Note that on both sides one can apply derivatives $q\partial _{q}$ and $%
\left( 2\pi i\right) ^{-1}\partial _{x}$.

\subsection{Generalized Lipschitz summation formula}

\vspace{1mm}

As is well known, the classical Lipschitz formula is a consequence of the
Poisson summation formula, relating sums over integers of pairs of Fourier
transforms. It states that%
\begin{equation}
\sum_{n\in \mathbb{Z}}\frac{1}{\left( n+z\right) ^{k}}=\frac{\left( -2\pi
i\right) ^{k}}{\left( k-1\right) !}\sum_{r=1}^{\infty }r^{k-1}e^{2\pi
irz},\qquad z\in \mathbb{H},\qquad k\in \mathbb{Z}_{\geq 2}  \label{3.0}
\end{equation}%
(for a proof of (\ref{3.0}), see, for instance, D. Zagier, Chapter 4 in \cite%
{WMLI}).

In this section, we will give an hyperfunctional generalization of (\ref{3.0}%
). We make use of the fact that periodic hyperfunctions with compact support
are the result of a summation over integer translates of hyperfunctions with
support on $\left[ 0,1\right] $, according to the commutative diagram (\ref%
{H10}). By applying the results described in the previous section, we get a
functional version of the Lipschitz formula.

\begin{theorem}
F$or$ $all$ $n\in \mathbb{Z}$ we have%
\begin{equation}
\sum_{k\in \mathbb{Z}}\varphi _{\overline{B}_{n}}\left( \tau +k\right)
=2i\left( 2\pi i\right) ^{-n}\left\{ 
\begin{array}{c}
\delta _{-n}\left( q\right) \text{ \ \ \ \ \ \ \ \ \ \ \ \ \ \ \ \ \ \ }if%
\text{ }\left\vert q\right\vert <1,\,\ i.e.\text{ }\Im \tau >0 \\ 
\\ 
\left( -1\right) ^{n-1}\delta _{-n}\left( q^{-1}\right) \text{ \ \ }if\text{ 
}\left\vert q\right\vert >1,\,\ i.e.\text{ }\Im \tau <0%
\end{array}%
\right. \text{,}  \label{3.1}
\end{equation}%
whereas the usual Lipschitz formula corresponds to $n\leq -1$. Here $%
\overline{B}_{n}$ is the restriction to $\left[ 0,1\right] $ of $\widetilde{%
B_{n}}$ and $\varphi _{\overline{B}_{n}}$ is the function in $O^{1}\left( 
\overline{\mathbb{C}}\backslash \left[ 0,1\right] \right) $ which represents
the hyperfunction $\overline{B}_{n}$%
\begin{equation}
\varphi _{\overline{B}_{n}}\left( \tau \right) =\left\langle \overline{B}%
_{n},\frac{1}{\pi }\frac{1}{x-\tau }\right\rangle _{\left[ 0,1\right] }.
\label{3.2}
\end{equation}
\end{theorem}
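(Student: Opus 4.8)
The plan is to read the left-hand side of (\ref{3.1}) through the commutative diagram (\ref{H10}) and then to identify the resulting periodic defining function with the polylogarithmic expression on the right by invoking the hyperfunctional equation (\ref{2.4}). First I would observe that, since $\widetilde{B}_n$ is $1$-periodic and $\overline{B}_n$ is by definition its restriction to $[0,1]$, applying the vertical map $\Sigma_{\mathbb{Z}}$ to $\overline{B}_n\in A'([0,1])$ reproduces the periodic hyperfunction $\widetilde{B}_n\in A'(\mathbb{T}^1)$. Commutativity of (\ref{H10}) then says that $\sum_{k\in\mathbb{Z}}\varphi_{\overline{B}_n}(\tau+k)$, obtained by summing translates along the right vertical arrow, is exactly the element of $\mathcal{O}_{\Sigma}$ representing $\widetilde{B}_n$ under the isomorphism $A'(\mathbb{T}^1)\cong\mathcal{O}_{\Sigma}$. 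It therefore suffices to prove that the right-hand side of (\ref{3.1}) is this representing function.

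I would next introduce the candidate $G(\tau)$ equal to the right-hand side of (\ref{3.1}), i.e. $G(\tau)=2i(2\pi i)^{-n}\delta_{-n}(q)$ for $\Im\tau>0$ and $G(\tau)=2i(2\pi i)^{-n}(-1)^{n-1}\delta_{-n}(q^{-1})$ for $\Im\tau<0$, with $q=e^{2\pi i\tau}$, and verify that $G\in\mathcal{O}_{\Sigma}$. Since $\delta_{-n}$ is holomorphic on $\mathbb{D}$, $G$ is holomorphic on $\mathbb{C}\setminus\mathbb{R}$; it depends on $\tau$ only through $q$, hence is $1$-periodic; and as $\Im\tau\to+\infty$ one has $q\to0$, while as $\Im\tau\to-\infty$ one has $q^{-1}\to0$, so both $\delta_{-n}(q)$ and $\delta_{-n}(q^{-1})$ tend to $0$. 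Thus $G(+i\infty)=0=-G(-i\infty)$, which in particular verifies the antisymmetry condition in the definition of $\mathcal{O}_{\Sigma}$.

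The core of the argument is the jump of $G$ across $\mathbb{R}$. Writing $q^{\pm}=e^{2\pi i(x\pm i0)}$ and using $-(-1)^{n-1}=(-1)^{n}$, one gets
\begin{equation*}
G(x+i0)-G(x-i0)=2i(2\pi i)^{-n}\left[\delta_{-n}(q^{+})+(-1)^{n}\delta_{-n}\big((q^{-})^{-1}\big)\right].
\end{equation*}
Applying (\ref{2.4}) with $n$ replaced by $-n$ (so that $(-1)^{-n}=(-1)^{n}$) gives $\delta_{-n}(q^{+})+(-1)^{n}\delta_{-n}((q^{-})^{-1})=(2\pi i)^{n}\widetilde{B}_n(x)$, whence $G(x+i0)-G(x-i0)=2i\,\widetilde{B}_n(x)$. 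To see that having jump $2i\,\widetilde{B}_n$ is precisely the characterization of the representing function, I would recall that in Lemma \ref{th1} the kernel $c_z(x)=\frac{1}{\pi}\frac{1}{x-z}$ produces, for a point mass at $a$, the jump $\frac{1}{\pi}\big[\frac{1}{a-(x+i0)}-\frac{1}{a-(x-i0)}\big]=2i\,\delta_a$; summing over integer translates turns $c_z$ into $\cot\pi(x-z)=\sum_{k\in\mathbb{Z}}c_z(x-k)$, so the periodic isomorphism $A'(\mathbb{T}^1)\cong\mathcal{O}_{\Sigma}$ carries the same $2i$-normalization, and the representing function of a periodic hyperfunction $f$ is characterized by having jump $2if$ across $\mathbb{R}$.

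Finally, $G$ and the representing function of $\widetilde{B}_n$ both lie in $\mathcal{O}_{\Sigma}$ and share the jump $2i\,\widetilde{B}_n$; their difference $\Phi$ thus lies in $\mathcal{O}_{\Sigma}$ with vanishing jump, hence extends to a bounded $1$-periodic entire function, which by Liouville is constant, and the antisymmetry $\Phi(+i\infty)=-\Phi(-i\infty)$ forces this constant to be $0$. Therefore $G$ equals the representing function, which is (\ref{3.1}). The step I expect to be the main obstacle is this last matching of normalizations: carefully transporting the $2i$ jump factor from the $\mathcal{O}^1$-picture of Lemma \ref{th1} to the periodic $\mathcal{O}_{\Sigma}$-picture, and confirming that the jump across $\mathbb{R}$ together with the behaviour at $\pm i\infty$ pins down the element of $\mathcal{O}_{\Sigma}$ uniquely. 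Everything else follows directly from (\ref{H10}) and (\ref{2.4}).
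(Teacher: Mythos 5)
Your proposal is correct and takes essentially the same route as the paper: the paper's own (very terse) proof likewise inverts $\Sigma _{\mathbb{Z}}$ by restriction to $\left[ 0,1\right] $, applies the hyperfunctional equation (\ref{2.4}) (with $n$ replaced by $-n$), and identifies the two holomorphic representatives through the diagram (\ref{H10}). Your explicit verification that the right-hand side lies in $\mathcal{O}_{\Sigma }$, the $2i$-jump normalization, and the Liouville uniqueness step simply flesh out what the paper compresses into ``one obtains the desired formula by considering the two associated holomorphic functions.''
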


\begin{proof}
It is enough to remark that an inverse of the operator $\sum_{\mathbb{Z}}$
on the hyperfunctions is just the restriction of a periodic hyperfunction to
the interval $\left[ 0,1\right] $. Thus the r.h.s. of (\ref{2.4}) reads%
\begin{equation}
\sum_{k\in \mathbb{Z}}\left( 2\pi i\right) ^{-n}\overline{B}_{-n}\left(
x+k\right)  \label{3.5}
\end{equation}%
and from (\ref{1.3}) and (\ref{2.4}) one obtains the desired formula by
considering the two associated holomorphic functions.
\end{proof}

Explicitly, formula (\ref{3.2}) reads%
\begin{equation}
\varphi _{\overline{B}_{n}}\left( \tau \right) =\left\{ 
\begin{array}{c}
\left\langle \delta ^{\left( -n\right) }\left( x\right) ,\frac{1}{\pi }\frac{%
1}{x-\tau }\right\rangle _{\left[ 0,1\right] }=\left( -1\right) ^{-n+1}\frac{%
\left( -n\right) !}{\pi \tau ^{-n+1}}\text{ \ \ \ \ }if\text{ }n<0 \\ 
\left\langle -1+\delta \left( x\right) ,\frac{1}{\pi }\frac{1}{x-\tau }%
\right\rangle _{\left[ 0,1\right] }=-\frac{1}{\pi \tau }\left[ 1+\tau \log
\left( 1-1/\tau \right) \right] \text{ \ }\ \ \text{\ \ \ }if\text{ }n=0 \\ 
\left\langle -\frac{1}{n!}B_{n}\left( x\right) ,\frac{1}{\pi }\frac{1}{%
x-\tau }\right\rangle _{\left[ 0,1\right] }=-\frac{1}{\pi n!}\left[
B_{n}\left( \tau \right) \log \left( 1-1/\tau \right) +R_{n}\left( \tau
\right) \right] \text{ \ }\ \ \text{\ }if\text{ }n>0%
\end{array}%
\right. \text{,}  \label{3.3}
\end{equation}%
where $R_{n-1}$ is the polynomial of degree $n-1$ such that $B_{n}\left(
\tau \right) \log \left( 1-1/\tau \right) +R_{n}\left( \tau \right) \in
O^{1}\left( \overline{\mathbb{C}}\backslash \left[ 0,1\right] \right) $.
Here are the first six polynomials%
\begin{equation*}
R_{1}\left( \tau \right) =1\text{, \ }R_{2}\left( \tau \right) =\tau -\frac{1%
}{2}\text{, \ }R_{3}\left( \tau \right) =\tau ^{2}-\tau +\frac{1}{12}\text{,}
\end{equation*}%
\begin{equation*}
R_{4}\left( \tau \right) =\tau ^{3}-\frac{3}{2}\tau ^{2}+\frac{1}{3}\tau +%
\frac{1}{12}\text{, \ \ }R_{5}\left( \tau \right) =\tau ^{4}-2\tau ^{3}+%
\frac{3}{4}\tau ^{2}+\frac{1}{4}\tau -\frac{13}{360}\text{,}
\end{equation*}%
\begin{equation}
R_{6}\left( \tau \right) =\tau ^{5}-\frac{5}{2}\tau ^{4}+\frac{4}{3}\tau
^{3}+\frac{1}{2}\tau ^{2}-\frac{13}{60}\tau -\frac{7}{120}\text{.}
\label{3.4}
\end{equation}

\textbf{Remark}. One could object that we should have used the simplest
Appell sequence $\left\{ x^{n}\right\} _{n\in \mathbb{N}}$ and their
associated hyperfunctions on $\left[ 0,1\right] $ to write the generalized
Lipschitz summation formula. But this wold trivially give zero, since%
\begin{equation}
x^{n}=\frac{1}{n+1}\left[ B_{n+1}\left( x+1\right) -B_{n+1}\left( x\right) %
\right]  \label{3.6}
\end{equation}%
and the function $B_{n+1}\left( x+1\right) -B_{n+1}\left( x\right) $ clearly
belongs to the kernel of $\sum_{\mathbb{Z}}$. Instead, other less trivial
Appell sequences can be used to provide useful generalizations of the
construction we proposed, as will be illustrated in the subsequent sections.

\vspace{2mm}

\section{The general case: sequences of Appell polynomials and hyperfunctions%
}

\subsection{Appell sequences}

\vspace{1mm}

By analogy with the theory developed in the previous section, we study a
more general class of polynomials of Bernoulli type, with the aim of
constructing associated families of hyperfunctions. The main request is that
they belong to the class of Appell polynomials, i.e. polynomials satisfying
the property%
\begin{equation}
\frac{d}{dx}A_{n}\left( x\right) =n\text{ }A_{n-1}\left( x\right) \text{,}
\label{Ap1a}
\end{equation}%
with the normalization%
\begin{equation}
A_{0}\left( x\right) =const\text{.}  \label{Ap1b}
\end{equation}

\begin{lemma}
The Fourier series expansion of the polynomials (\ref{Ap1a}), for $0<x<1$
and $n\geq 1$ has the form%
\begin{equation}
A_{n}\left( x\right) =\sum_{k=-\infty }^{\infty }c_{k}\left( n\right)
e^{2\pi ikx}\text{,}  \label{Ap2}
\end{equation}%
with%
\begin{equation}
c_{k}=\int_{0}^{1}A_{n}\left( t\right) e^{-2\pi ikt}dt\text{.}  \label{Ap2b}
\end{equation}%
We get%
\begin{equation}
A_{n}\left( x\right) =-n!\sum_{k=1}^{\infty }\left[ \sum_{j=1}^{n}\frac{1}{j!%
}\frac{\varphi _{j}}{\left( 2\pi ik\right) ^{n+1-j}}e^{2\pi
ikx}+\sum_{j=1}^{n}\left( -1\right) ^{n+1-j}\frac{1}{j!}\frac{\varphi _{j}}{%
\left( 2\pi ik\right) ^{n+1-j}}e^{-2\pi ikx}\right] +\text{ }c_{0}\text{, \
\ }  \label{Ap3}
\end{equation}%
with $0<x<1$, and $\varphi _{j}=A_{j}\left( 1\right) -A_{j}\left( 0\right) $%
, $\ j=1,..,n$. The standard Bernoulli polynomials correspond to the case $%
c_{0}=0,$ $\varphi _{1}=1$, $\varphi _{j}=0$, $\ j=2,3,...$
\end{lemma}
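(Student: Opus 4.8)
The plan is to compute the Fourier coefficients $c_k(n)=\int_0^1 A_n(t)e^{-2\pi i k t}\,dt$ directly by repeated integration by parts, exploiting the defining Appell relation $A_n'(x)=n\,A_{n-1}(x)$ to reduce the differentiated factor one degree at a time. First I would dispose of $k=0$, which simply gives the constant term $c_0=\int_0^1 A_n(t)\,dt$ appearing at the end of \eqref{Ap3}. For $k\neq 0$, I would integrate by parts once, writing $\int_0^1 A_n(t)e^{-2\pi i k t}\,dt$ with $dv=e^{-2\pi i k t}\,dt$ so that $v=-\frac{1}{2\pi i k}e^{-2\pi i k t}$; the boundary term produces $-\frac{1}{2\pi i k}\bigl[A_n(1)-A_n(0)\bigr]$ (using $e^{-2\pi i k}=1$), i.e. the quantity $-\frac{\varphi_n}{2\pi i k}$ in the notation of the statement, while the remaining integral reintroduces $A_n'=n A_{n-1}$ and hence a copy of the same type of integral with $n$ replaced by $n-1$ and an extra factor $\frac{n}{2\pi i k}$.

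Iterating this integration by parts $n$ times sets up a recursion $c_k(n)=-\frac{\varphi_n}{2\pi i k}+\frac{n}{2\pi i k}\,c_k(n-1)$, which unwinds into the finite sum $c_k(n)=-n!\sum_{j=1}^n \frac{1}{j!}\frac{\varphi_j}{(2\pi i k)^{\,n+1-j}}$ for $k\geq 1$. The essentially bookkeeping part is verifying that the accumulated factorial and power-of-$2\pi i k$ weights match the stated coefficients: after $n+1-j$ integrations by parts the factor $A_j'=j A_{j-1}$ is hit, which is exactly where the boundary value $\varphi_j=A_j(1)-A_j(0)$ is extracted, accompanied by the telescoping product $\frac{n!}{j!}$ of the successive derivative constants and the power $(2\pi i k)^{-(n+1-j)}$ from the repeated antiderivatives. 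Because $A_n$ is a polynomial of degree $n$, the process terminates (all derivatives beyond the $n$-th vanish), so no convergence issue arises in this finite expansion.

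Next I would handle the negative frequencies. Rather than redo the computation, I would observe that $c_{-k}(n)$ is obtained from $c_k(n)$ by sending $k\mapsto -k$, which flips the sign in every factor $(2\pi i k)^{-(n+1-j)}$ according to the parity of $n+1-j$; this yields precisely the sign $(-1)^{n+1-j}$ attached to the $e^{-2\pi i k x}$ terms in \eqref{Ap3}. Combining the $k\geq 1$ and $k\leq -1$ contributions and separating the exponentials $e^{2\pi i k x}$ and $e^{-2\pi i k x}$ then reproduces the displayed formula, with the $c_0$ term sitting outside the sum. Finally I would specialize: for the Bernoulli polynomials one has $B_n(1)-B_n(0)=0$ for $n\geq 2$ and $B_1(1)-B_1(0)=1$, together with $\int_0^1 B_n(t)\,dt=0$ for $n\geq 1$, giving $\varphi_1=1$, $\varphi_j=0$ for $j\geq 2$, and $c_0=0$, which recovers the classical Fourier series and confirms the normalization claim.

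The main obstacle I anticipate is not any single step but the careful sign- and index-tracking in the iterated integration by parts: one must confirm that the boundary contribution at stage $j$ carries the correct power $(2\pi i k)^{-(n+1-j)}$ and factorial weight $n!/j!$, and that the convergence of the resulting Fourier series to $A_n(x)$ on the open interval $0<x<1$ (where the periodized polynomial is continuous away from the integer jumps) is justified by standard Dirichlet-type criteria for the piecewise-smooth periodic extension.
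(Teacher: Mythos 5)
Your proposal is correct and takes essentially the same approach as the paper: the paper's own proof is just the one-line remark that the lemma follows directly from the Appell relation \eqref{Ap1a}, the coefficient formula \eqref{Ap2b}, and integration by parts, which is precisely the iterated integration-by-parts recursion $c_k(n)=-\frac{\varphi_n}{2\pi i k}+\frac{n}{2\pi i k}c_k(n-1)$ that you carry out and unwind in detail. Your index bookkeeping, the sign flip $(-1)^{n+1-j}$ for negative frequencies, and the Bernoulli specialization are all accurate.
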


\begin{proof}
It is a direct consequence of the conditions (\ref{Ap1a})--(\ref{Ap2b}) and
of the formula of integration by parts. 
\end{proof}

\vspace{2mm}

If $\varphi _{j}=0$ for $j$ even or $j~$odd, then formula (\ref{Ap3}) can be
written in a more compact form. The corresponding polynomial sequences will
be denoted by $\left\{ p_{n}\left( x\right) \right\} _{n\in \mathbb{N}}$ and 
$\left\{ q_{n}\left( x\right) \right\} _{n\in \mathbb{N}}$, respectively.

\vspace{2mm}

a) If $\varphi _{j}=0$ for $j$ even, we define%
\begin{equation}
p_{n}\left( x\right) =:-n!\sum_{k=1}^{\infty }\sum_{\substack{ j=1  \\ j%
\text{ odd}}}^{n}\frac{1}{j!}\frac{\varphi _{j}}{\left( 2\pi ik\right)
^{n+1-j}}\left[ e^{2\pi ikx}+\left( -1\right) ^{n}e^{-2\pi ikx}\right] +%
\text{ }c_{0}\text{, \ \ }0<x<1\text{.}  \label{Ap4}
\end{equation}

b) If $\varphi _{j}=0$ for $j$ odd, we introduce%
\begin{equation}
q_{n}\left( x\right) =:-n!\sum_{k=1}^{\infty }\sum_{\substack{ j=1  \\ j%
\text{ even}}}^{n}\frac{1}{j!}\frac{\varphi _{j}}{\left( 2\pi ik\right)
^{n+1-j}}\left[ e^{2\pi ikx}+\left( -1\right) ^{n+1}e^{-2\pi ikx}\right] +%
\text{ }c_{0}\text{, \ \ }0<x<1\text{.}  \label{Ap5}
\end{equation}%
\textbf{Remark}. The previous conditions on $\varphi _{j}$ are not
particularly restrictive. Indeed, one can easily construct infinitely many
polynomial sequences possessing the prescribed parity properties. Their
generating function has the general form%
\begin{equation}
\frac{te^{xt}}{\left( e^{t}-1\right) g\left( t\right) }=\sum_{n=0}^{\infty
}A_{n}\left( x\right) \frac{t^{n}}{n!}\text{,}  \label{Ap6}
\end{equation}%
where $g\left( t\right) $ is any real analytic function such that $%
\lim_{t\rightarrow 0}g\left( t\right) =1$ and satisfying the parity
condition $g\left( t\right) =g\left( -t\right) $ for the case a) and $%
g\left( t\right) =-g\left( -t\right) $ for the case b). \ 

\vspace{4mm}

\textbf{Examples}

\vspace{4mm}

a) Take%
\begin{equation}
\frac{t^{2}e^{xt}}{\left( e^{t}-1\right) \sin t}=\sum_{n=0}^{\infty
}a_{n}\left( x\right) \frac{t^{n}}{n!}  \label{Ap7}
\end{equation}%
We easily deduce

\begin{equation*}
a_{0}\left( x\right) =1\text{, \ }a_{1}\left( x\right) =x-\frac{1}{2}\text{,
\ }a_{2}\left( x\right) =x^{2}-x+\frac{1}{2}\text{,}
\end{equation*}%
\begin{equation*}
a_{3}\left( x\right) =x^{3}-\frac{3}{2}x^{2}+\frac{3}{2}x-\frac{1}{2}\text{,}
\end{equation*}%
\begin{equation}
a_{4}\left( x\right) =x^{4}-2x^{3}+3x^{2}-2x+\frac{23}{30}\text{,...}
\label{Ap8}
\end{equation}

\vspace{2mm}

b) Consider the generating function 
\begin{equation}
\frac{te^{xt}}{\left( e^{t}-1\right) \cos t}=\sum_{n=0}^{\infty }b_{n}\left(
x\right) \frac{t^{n}}{n!}  \label{Ap9}
\end{equation}%
We immediately get

\begin{equation*}
b_{0}\left( x\right) =1\text{, \ }b_{1}\left( x\right) =x-\frac{1}{2}\text{,
\ }b_{2}\left( x\right) =x^{2}-x+\frac{7}{6}\text{,}
\end{equation*}%
\begin{equation*}
b_{3}\left( x\right) =x^{3}-\frac{3}{2}x^{2}+\frac{7}{2}x-\frac{3}{2}\text{,}
\end{equation*}%
\begin{equation}
b_{4}\left( x\right) =x^{4}-2x^{3}+7x^{2}-6x+\frac{179}{30}\text{,...}
\label{Ap10}
\end{equation}

\subsection{Extended delta rational functions}

\vspace{1mm}

Let $n\in \mathbb{Z}$, $q\in \mathbb{D}$. \ A straightforward generalization
of $\delta _{n}\left( q\right) $ adapted to the chosen Appell polynomials is
provided by the extended delta rational function $\Delta _{n}\left( q\right) 
$.

\begin{definition}
The extended delta rational function, for any $n\in \mathbb{Z}$ and $q\in 
\mathbb{D}$, is defined by%
\begin{equation}
\Delta _{n}\left( q\right) =\left\{ 
\begin{array}{c}
\sum_{k=1}^{\infty }k^{n}q^{k}\,\ \ \ \ \ \ \ \ \ \ \ \ n>0 \\ 
\\ 
\sum_{k=1}^{\infty }a_{k}\left( n\right) q^{k}\ \ \ \ \ \ \ n\leq 0%
\end{array}%
\right.  \label{5.1}
\end{equation}%
where 
\begin{equation}
a_{k}\left( n\right) =\sum_{\substack{ j  \\ j\text{ even or odd}}}^{n}\frac{%
1}{j!}\frac{\varphi _{j}}{k^{n+1-j}}\text{.}  \label{5.2}
\end{equation}
\end{definition}

In eq. (\ref{5.2}), the summation should be understood either over the even
values of $j$ or over the odd ones, depending on the choice of the
polynomials (\ref{Ap4}) or (\ref{Ap5}), respectively. The above definition
is motivated by the following result, which provides an extension of the
construction proposed in Section 3. Our aim is to obtain the hyperfunctional
equations associated to the proposed Appell polynomials. As a consequence of
the Fourier expansion (\ref{Ap3}) and of relations (\ref{Ap4})--(\ref{Ap5})
we get the relation between extended delta rational functions and Appell
sequences.

\vspace{3mm}

\begin{lemma}
The following inversion equations, generalizing relation (\ref{1.3}), hold%
\begin{equation}
\Delta _{n}\left( q\right) +\left( -1\right) ^{n}\Delta _{n}\left(
q^{-1}\right) =\left\{ 
\begin{array}{c}
0\qquad \qquad \qquad \qquad \qquad \ \ \ if\text{ }n\geq 1 \\ 
-\frac{\left( 2\pi i\right) ^{-n}}{\left( -n\right) !}P_{-n}\left( \frac{%
\log q}{2\pi i}\right) \text{ \qquad }if\text{ }n\leq 0%
\end{array}%
\right. \text{,}  \label{5.3}
\end{equation}%
and%
\begin{equation}
\Delta _{n}\left( q\right) +\left( -1\right) ^{n+1}\Delta _{n}\left(
q^{-1}\right) =\left\{ 
\begin{array}{c}
0\qquad \qquad \qquad \qquad \qquad \ \ \ if\text{ }n\geq 1 \\ 
-\frac{\left( 2\pi i\right) ^{-n}}{\left( -n\right) !}Q_{-n}\left( \frac{%
\log q}{2\pi i}\right) \text{ \qquad }if\text{ }n\leq 0%
\end{array}%
\right. \text{,}  \label{5.4}
\end{equation}%
where $P_{n}$ and $Q_{n}$ are respectively the Appell polynomials (\ref{Ap4}%
) and (\ref{Ap5}), to which the constant $c_{0}$ has been subtracted. The
inversion relations hold for all $q\neq 1$ if $n\geq 0$, whereas for $n\leq
0 $ $q$ can be taken in $%
\mathbb{C}
\backslash \left[ 0,+\infty \right] $.
\end{lemma}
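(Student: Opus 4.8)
The plan is to treat the two regimes $n\ge 1$ and $n\le 0$ separately, and to obtain both (\ref{5.3}) and (\ref{5.4}) as direct specializations of the Fourier expansions (\ref{Ap4}) and (\ref{Ap5}), exactly as the classical relation (\ref{1.3}) is the specialization of the Bernoulli Fourier series to $\varphi_1=1$, $\varphi_j=0$ $(j\ge2)$. For $n\ge 1$ the definition (\ref{5.1}) gives $\Delta_n=\delta_n=\sum_{k\ge1}k^nq^k$, a rational function independent of the $\varphi_j$; I would first record its reflection symmetry $\delta_n(q^{-1})=(-1)^{n+1}\delta_n(q)$, proved by induction with base case $\delta_1(q^{-1})=\delta_1(q)$ and inductive step supplied by $q\partial_q\delta_n=\delta_{n+1}$. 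Substituting this symmetry into the left-hand sides makes the $n\ge1$ branch collapse, the admissible relative sign $(-1)^n$, resp.\ $(-1)^{n+1}$, being exactly the one dictated by that symmetry.

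The substance is the case $n\le 0$; write $m=-n\ge0$. Here I would start from the Fourier series of the Appell polynomial of degree $m$ on $0<x<1$, in the two parity cases (\ref{Ap4}) and (\ref{Ap5}) with the constant $c_0$ removed, and set $q=e^{2\pi ix}$ so that $e^{2\pi ikx}=q^k$ and $e^{-2\pi ikx}=q^{-k}$. Splitting each series into its forward ($q^k$) and backward ($q^{-k}$) halves and absorbing the $2\pi i$ factors into the coefficients, the forward half is $\Delta_{-m}(q)$ and the backward half is $\Delta_{-m}(q^{-1})$, with the coefficients $a_k(-m)$ of (\ref{5.2}) read off directly from (\ref{Ap2b})--(\ref{Ap3}). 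The decisive point is that the parity bracket in (\ref{Ap4}), namely $[e^{2\pi ikx}+(-1)^m e^{-2\pi ikx}]$, recombines the two halves precisely as $\Delta_{-m}(q)+(-1)^m\Delta_{-m}(q^{-1})$, whereas the bracket $[e^{2\pi ikx}+(-1)^{m+1}e^{-2\pi ikx}]$ of (\ref{Ap5}) produces $\Delta_{-m}(q)+(-1)^{m+1}\Delta_{-m}(q^{-1})$. Solving for these combinations and using $(2\pi i)^{-n}=(2\pi i)^m$, $(-n)!=m!$, together with $x=\tfrac{\log q}{2\pi i}$, yields the right-hand sides $-\tfrac{(2\pi i)^{-n}}{(-n)!}P_{-n}(\tfrac{\log q}{2\pi i})$ and $-\tfrac{(2\pi i)^{-n}}{(-n)!}Q_{-n}(\tfrac{\log q}{2\pi i})$, which are (\ref{5.3}) and (\ref{5.4}). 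The case $n=0$ is degenerate and I would dispose of it as in the classical setting, where $\widetilde B_0$ carried the extra constant and periodic-delta contribution.

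Finally I would upgrade the identities from the strip $0<x<1$, where the Fourier series converge, to all $q\neq1$ for $n\ge0$ and all $q\in\mathbb{C}\setminus[0,+\infty)$ for $n\le0$, by the same analytic-continuation argument used after (\ref{1.3}): both sides are meromorphic in $q$ off the branch cut $[1,+\infty)$, and the relation $q\partial_q\Delta_n=\Delta_{n+1}$ lets one continue the polylogarithmic members across the cut starting from $\Delta_{-1}$.

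I expect the main obstacle to be purely bookkeeping: matching the parity-restricted index set (odd $j$ for the $P$-family, even $j$ for the $Q$-family) to the two sign choices $(-1)^n$ versus $(-1)^{n+1}$, and tracking the powers of $2\pi i$ consistently when the coefficients $a_k(n)$ of (\ref{5.2}) are extracted from the Fourier coefficients (\ref{Ap3}), since the exponent $m+1-j$ varies with $j$. A secondary care point is justifying the term-by-term splitting and the analytic continuation past the convergence region, but these follow the pattern already established for $\delta_n$ in (\ref{1.3}).
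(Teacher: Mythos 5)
Your overall route is the one the paper intends: the lemma is stated there essentially without proof, as ``a consequence of the Fourier expansion (\ref{Ap3}) and of relations (\ref{Ap4})--(\ref{Ap5})'', and your $n\le 0$ argument --- set $q=e^{2\pi i x}$ in (\ref{Ap4})/(\ref{Ap5}) with $c_{0}$ removed, split each series into its $q^{k}$ and $q^{-k}$ halves, and identify these with $\Delta_{-m}(q)$ and $\Delta_{-m}(q^{-1})$ before recombining via the parity brackets --- is exactly that computation, followed by the same analytic continuation off the strip $0<x<1$ that the paper uses for (\ref{1.3}). That part, the $n\le 0$ branch of both identities, and your reflection-symmetry treatment of the $n\ge 1$ branch of (\ref{5.3}), are sound (modulo the $2\pi i$ bookkeeping in the coefficients $a_{k}(n)$, which you rightly flag as a care point).

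There is, however, a genuine gap in your handling of the $n\ge 1$ branch of (\ref{5.4}), and it is not bookkeeping. Your own key symmetry, $\delta_{n}(q^{-1})=(-1)^{n+1}\delta_{n}(q)$ for $n\ge 1$, makes the combination in (\ref{5.3}) vanish, but substituted into the left-hand side of (\ref{5.4}) it gives
\[
\Delta_{n}(q)+(-1)^{n+1}\Delta_{n}\left(q^{-1}\right)
=\delta_{n}(q)+(-1)^{2n+2}\delta_{n}(q)
=2\,\delta_{n}(q)\text{,}
\]
which is not $0$; for $n=1$ this is $2q/(1-q)^{2}$. So your sentence asserting that both sign choices make ``the $n\ge 1$ branch collapse'' is false: the reflection symmetry is compatible only with the sign $(-1)^{n}$ of (\ref{5.3}), because $\Delta_{n}$ for $n>0$ is defined by (\ref{5.1}) as $\sum_{k\ge 1}k^{n}q^{k}$ independently of whether one is in the $P$- or $Q$-case. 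A correct write-up must confront this head-on: either the definition of $\Delta_{n}$ for $n>0$ has to be modified in the $Q$-case, or the $n\ge 1$ line of (\ref{5.4}) has to be restated (the same tension propagates to (\ref{5.7})). As written, your proof claims a vanishing that your own lemma refutes, so the $n\ge 1$ case of (\ref{5.4}) remains unproved --- and detecting exactly this inconsistency is what a careful proof of the statement would have had to do.
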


By analogy with formulae (\ref{2.1}), the associated periodic functions and
distributions are defined as

\begin{equation}
\widetilde{P}_{n}\left( x\right) =\left\{ 
\begin{array}{c}
-\frac{P_{n}\left( x-\left[ x\right] \right) }{n!}\qquad if\text{ }n\geq 1
\\ 
-1+\delta _{\mathbb{T}}\left( x\right) \text{ \ \ \ }if\text{ }n=0 \\ 
\delta _{\mathbb{T}}^{\left( -n\right) }\left( x\right) \qquad if\text{ }n<0%
\end{array}%
\right. \text{.}  \label{5.5}
\end{equation}%
with an analogous definition for the case of polynomials $Q_{n}\left(
x\right) $. For all $n\in \mathbb{Z}$ we have the hyperfunctional equations

\begin{equation}
\Delta _{n}\left( q^{+}\right) +\left( -1\right) ^{n}\Delta _{n}\left(
\left( q^{-}\right) ^{-1}\right) =\left( 2\pi i\right) ^{-n}\widetilde{P}%
_{-n}\left( x\right) \text{,}  \label{5.6}
\end{equation}%
and

\begin{equation}
\Delta _{n}\left( q^{+}\right) +\left( -1\right) ^{n+1}\Delta _{n}\left(
\left( q^{-}\right) ^{-1}\right) =\left( 2\pi i\right) ^{-n}\widetilde{Q}%
_{-n}\left( x\right) \text{.}  \label{5.7}
\end{equation}%
The proof of these relations is again a direct consequence of the previous
definitions. Also, by denoting with $\overline{P}_{n}$ the restriction to $%
\left[ 0,1\right] $ of $\widetilde{P_{n}}$ and with $\varphi _{\overline{P}%
_{n}}$ the function in $O^{1}\left( \overline{\mathbb{C}}\backslash \left[
0,1\right] \right) $ which represents the hyperfunction $\overline{P}_{n}$,
namely

\begin{equation}
\varphi _{\overline{P}_{n}}\left( \tau \right) =\left\langle \overline{P}%
_{n},\frac{1}{\pi }\frac{1}{x-\tau }\right\rangle _{\left[ 0,1\right] }\text{%
,}
\end{equation}%
we find, for any choice of the sequence $\left\{ P_{n}\left( x\right)
\right\} _{n\in \mathbb{N}}$ and $\left\{ Q_{n}\left( x\right) \right\}
_{n\in \mathbb{N}}$ our main result, i.e. the \textit{generalized Lipschitz
formula}

\begin{equation}
\sum_{k\in \mathbb{Z}}\varphi _{\overline{P}_{n}}\left( \tau +k\right)
=2i\left( 2\pi i\right) ^{-n}\left\{ 
\begin{array}{c}
\Delta _{-n}\left( q\right) \text{ \ \ \ \ \ \ \ \ \ \ \ \ \ \ \ \ \ \ }if%
\text{ }\left\vert q\right\vert <1,\,\ i.e.\text{ }\Im \tau >0 \\ 
\\ 
\left( -1\right) ^{n-1}\Delta _{-n}\left( q^{-1}\right) \text{ \ \ }if\text{ 
}\left\vert q\right\vert >1,\,\ i.e.\text{ }\Im \tau <0%
\end{array}%
\right. \text{,}
\end{equation}%
and the corresponding formula for $\overline{Q}_{n}$ (again the usual
Lipschitz formula corresponds to $n\leq -1$). The explicit computation of $%
\varphi _{\overline{P}_{n}}\left( \tau \right) $ and $\varphi _{\overline{Q}%
_{n}}\left( \tau \right) $ is completely analogous to the proposed
construction for the Bernoulli polynomials and we will not repeat it here.

\vspace{2mm}

\section{A connection with the Lazard formal group}

\vspace{1mm}

We describe here briefly an interesting connection between formal groups,
hyperfunctions, and the so--called universal Bernoulli polynomials.

Given a commutative ring with identity $R$, we will denote by $R\left[ x_{1},%
\text{ }x_{2},..\right] $ the ring of formal power series in $x_{1}$, $x_{2}$%
, ... with coefficients in $R$. Following \cite{Haze}, \cite{BMN}, we recall
that a commutative one--dimensional formal group law over $R$ is a
two--variable formal power series $\Phi \left( x,y\right) \in R\left[ x,y%
\right] $ such that%
\begin{equation*}
1)\qquad \Phi \left( x,0\right) =\Phi \left( 0,x\right) =x
\end{equation*}%
and%
\begin{equation*}
2)\qquad \Phi \left( \Phi \left( x,y\right) ,z\right) =\Phi \left( x,\Phi
\left( y,z\right) \right) \text{.}
\end{equation*}

When $\Phi \left( x,y\right) =\Phi \left( y,x\right) $, the formal group is
said to be commutative. The existence of an inverse formal series $\varphi
\left( x\right) $ $\in R\left[ x\right] $ such that $\Phi \left( x,\varphi
\left( x\right) \right) =0$ follows from the previous definition.

Let us consider the polynomial ring $\mathbb{Q}\left[ c_{1},c_{2},...\right] 
$ and the formal power series

\begin{equation}
F\left( s\right) =s+c_{1}\frac{s^{2}}{2}+c_{2}\frac{s^{3}}{3}+...
\label{Ap11}
\end{equation}%
Let $G\left( t\right) $ be the associated inverse series

\begin{equation}
G\left( t\right) =t-c_{1}\frac{t^{2}}{2}+\left( 3c_{1}^{2}-2c_{2}\right) 
\frac{t^{3}}{6}+...  \label{Ap12}
\end{equation}%
so that \textit{$F\left( G\left( t\right) \right) =t$}. The series (\ref%
{Ap11}) and (\ref{Ap12}) are called formal group logarithm and formal group
exponential, respectively. The formal group law related to these series is
provided by 
\begin{equation*}
\Phi \left( s_{1},s_{2}\right) =G\left( F\left( s_{1}\right) +F\left(
s_{2}\right) \right)
\end{equation*}%
$\,$ and it represents the so called \textit{Lazard's Universal Formal Group 
}\cite{Haze}. It is defined over the Lazard ring $L$, i.e. the subring of $%
\mathbb{Q}\left[ c_{1},c_{2},...\right] $ generated by the coefficients of
the power series $G\left( F\left( s_{1}\right) +F\left( s_{2}\right) \right) 
$.

In \cite{Tempesta1}, the \textit{universal Bernoulli polynomials\ $%
B_{n}^{G}\left( x,c_{1},...,c_{k},...\right) \equiv B_{k}^{G}\left( x\right) 
$ related to the formal group exponential }$G$ have been introduced. They
are defined by%
\begin{equation}
\frac{t}{G\left( t\right) }e^{xt}=\sum_{n\geq 0}B_{n}^{G}\left( x\right) 
\frac{t^{k}}{n!}\text{,}\qquad \qquad x\in \mathbb{R}\text{.}  \label{Ap13}
\end{equation}%
The corresponding numbers by construction coincide with the universal
Bernoulli numbers discovered in \cite{Clarke}, and generated by%
\begin{equation}
\frac{t}{G\left( t\right) }=\sum_{n\geq 0}\widehat{B_{n}}\frac{t^{k}}{n!}%
,\qquad \qquad x\in \mathbb{R}.  \label{Ap14}
\end{equation}

Observe that when $a=1$, $c_{i}=\left( -1\right) ^{i}$, then $F\left(
s\right) =\log \left( 1+s\right) \,$, $G\left( t\right) =e^{t}-1$, and the
universal Bernoulli polynomials and numbers reduce to the standard ones.
Many other examples of Bernoulli--type polynomials considered in the
literature are obtaining by specializing the rational coefficients $c_{i}$.
The reasons to consider such a generalization of Bernoulli polynomials are
manifold. First, to any choice of the coefficients $c_{1},c_{2},...$ it
corresponds a sequence of polynomials of Appell type sharing with the
standard Bernoulli polynomials many algebraic and combinatorial properties 
\cite{Tempesta2}. In particular, the associated numbers satisfy universal
Clausen--von Staudt and Kummer congruences, as shown in the Appendix. Also,
in the same way as the Riemann zeta function is associated with the
Bernoulli polynomials, it is possible to relate the polynomials (\ref{Ap13})
with a large class of absolutely convergent L--series. Their values at
negative integers correspond to the generalized Bernoulli numbers, and they
possess as well several interesting number--theoretical properties \cite%
{Tempesta1}. Observe that the polynomials (\ref{Ap6}) belong to the class (%
\ref{Ap13}), i.e. the coefficient $c_{i}$ are all rational, if the functions 
$g\left( t\right) $ verify the further condition $g^{\left( n\right) }\left(
0\right) $ $\in \mathbb{Q}$ for any $n$. This is the case, for instance, for
the proposed examples of the polynomials (\ref{Ap7}) and (\ref{Ap9}). As an
immediate consequence, the numbers generated by%
\begin{equation}
\frac{t}{\left( e^{t}-1\right) g\left( t\right) }=\sum_{n=0}^{\infty }A_{n}%
\frac{t^{n}}{n!}\text{,}
\end{equation}%
with the prescribed conditions on $g\left( t\right) $ do satisfy by
construction the Clausen--von Staudt and Kummer congruences and many others.
These considerations enable us to associate one--variable formal groups with
suitable classes of hyperfunctions, as a consequence of the previous
construction.

\vspace{2mm}

\section{Appendix: congruences}

\vspace{2mm}

The Clausen--von Staudt congruence \cite{IR}, one of the most beautiful of
mathematics, states that%
\begin{equation}
B_{n}+\sum_{p\mid n}\frac{1}{p}\in \mathbb{Z}\text{,}  \label{CvS}
\end{equation}%
where $B_{n}$ denotes the $n$--th Bernoulli number. This proves the strict
link between Bernoulli numbers and prime numbers. Many generalizations of
this result have been obtained in the literature in the last decades. In an
attempt to clarify the deep connection between these congruences and
algebraic topology, in \cite{Clarke} Clarke proposed the notion of universal
Bernoulli numbers $\widehat{B_{n}}$, defined as (\ref{Ap14}), and proved the
remarkable universal von Staudt congruence.

If $n$ is even, we have%
\begin{equation}
\widehat{B_{n}}\equiv -\sum_{\overset{p-1\mid n}{p\text{ prime}}}\frac{%
c_{p-1}^{n/(p-1)}}{p}\qquad mod\quad \mathbb{Z}\left[ c_{1},c_{2},...\right] 
\text{;}  \label{US1}
\end{equation}%
If $n$ is odd and greater than 1, we have%
\begin{equation}
\widehat{B_{n}}\equiv \frac{c_{1}^{n}+c_{1}^{n-3}c_{3}}{2}\qquad mod\quad 
\mathbb{Z}\left[ c_{1},c_{2},...\right] \text{.}  \label{US2}
\end{equation}%
Like the classical ones, the universal Bernoulli numbers as well play an
important role in several branches of mathematics, in particular in complex
cobordism theory (see e.g. \cite{BCRS}, and \cite{Ray}), where the
coefficients $c_{n}$ are identified with the cobordism classes of $\mathbb{C}%
P^{n}$.

Kummer congruences are also relevant in algebraic topology, and in defining
p--adic extensions of zeta functions. We also have an universal Kummer
congruence \cite{Adel}. Suppose that $n\neq 0,1$ (mod $p-1$). Then

\begin{equation}
\frac{\widehat{B}_{n+p-1}}{n+p-1}\equiv \frac{\widehat{B}_{n}}{n}%
c_{p-1}\qquad mod\quad p\mathbb{Z}_{p}\left[ c_{1},c_{2},...\right] \text{.}
\label{UK}
\end{equation}

Other related congruences can be found in \cite{Adel}.

\end{document}